\theoremstyle{plain}
\newtheorem{theorem}{Theorem}
\newtheorem{proposition}{Proposition}
\newtheorem{lemma}{Lemma}
\theoremstyle{definition}
\newtheorem{example}{Example}
\newtheorem{remark}{Remark}
\DeclareMathOperator{\red}{red}
\DeclareMathOperator{\reg}{reg}
\DeclareMathOperator{\Res}{Res}
\newcommand{\enm}[1]{\ensuremath{#1}}          %
\newcommand{\cal}[1]{\mathcal{#1}}
\newcommand{\ZZ}{\enm{\mathbb{Z}}}
\newcommand{\PP}{\enm{\mathbb{P}}}
\newcommand{\Ii}{\enm{\cal{I}}}
\newcommand{\Ll}{\enm{\cal{L}}}
\newcommand{\Oo}{\enm{\cal{O}}}
\newcommand{\Rr}{\enm{\cal{R}}}
\renewcommand{\phi}{\varphi}
\renewcommand{\theta}{\vartheta}
\renewcommand{\epsilon}{\varepsilon}
\begin{document}

\title[secant variety]
{Partially symmetric tensors and the non-defectivity of secant varieties of products with a projective line as a factor}
\author{Edoardo Ballico}
\address{Dept. of Mathematics\\
 University of Trento\\
38123 Povo (TN), Italy}
\email{edoardo.ballico@unitn.it}
\thanks{The author is a member of GNSAGA of INdAM (Italy).}
\subjclass[2010]{14N05; 15A69}
\keywords{secant variety; Segre-Veronese varieties; partially symmetric tensors}

\begin{abstract}
We prove (with a mild restriction on the multidegrees) that all secant varieties of Segre-Veronese varieties with $k>2$ factors, $k-2$ of them being $\PP^1$, have the expected dimension.
This is equivalent to compute the dimension of the set of all partially symmetric tensors with a fixed rank and the same format. The proof uses the case $k=2$ proved by Galuppi and Oneto. Our theorem is an easy consequence of a theorem proved here for arbitrary projective varieties with a projective line as a factor and with respect to complete linear systems. 
\end{abstract}

\maketitle
\section{Introduction}
Let $Y$ be an integral projective variety over an algebraically closed field with characteristic $0$ and $\Ll$ a very ample line bundle of $Y$ such that $h^1(\Ll)=0$. Set $\alpha:= h^0(\Ll)$. For any positive integer $z$ and for any integral and non-degenerate variety 
$W\subset \PP^r$ let $\sigma _z(W)$ denote the $z$-secant variety of $W$, i.e. the closure of the union of all linear spaces spanned by $z$ points of $W$. Many practical linear algebra problems and applications use secant varieties or, at least, their dimensions (\cite{l}). For instance, if $\sigma _{z-1}(W)\ne \PP^r$, then the integer $\dim \sigma _z(W)$ is the dimension of the set of all $q\in \PP^r$ with $W$-rank $z$. If we take as $W$ a multiprojective space, then the $W$-rank decompositions are the partially symmetric tensor decompositions with the minimal number of addenda. If we take $W=\PP^n$, then the $W$-ranks correspond to the additive decompositions of forms in $n+1$ variables with a minimal number of addenda.

We see $Y\subset \PP^{\alpha -1}$ as embedded variety by the complete linear system $|\Ll|$.
Our assumptions are satisfied if $Y$ is not secant defective, i.e. if $\dim \sigma_z(Y) =\max\{\alpha -1,z(\dim Y+1)\}$ for all positive integer $z$. In our main result (Theorem \ref{i1}) we only require that $\sigma _z(Y)$ has dimension $(\dim Y+1)z-1$ for $z=\lfloor \alpha/(\dim Y+1)\rfloor$, i.e. for the largest integer $z$ such that $z(\dim Y +1)\le \alpha$. But the thesis is that all the secant varieties of the embedding of $X$ described in Theorems \ref{minus} and \ref{i1} have the expected dimension. 

Segre-Veronese varieties are related to partially symmetric tensors and hence their secant varieties (or at least their dimensions) are an active topic of research with several papers devoted to the study of the dimensions of their secant varieties (\cite{AB09a,AB09b,cgg1,go,lmr,lp}).
As a corollary of our results we prove the following result in which we only use the secant non-defectivity of almost all  Segre-Veronese varieties with $2$-factors (\cite{go}).

\begin{theorem}\label{minus}
Take $X:= \PP^{n_1}\times \PP^{n_2}\times (\PP^1)^{k-2}$, $k\ge 3$, embedded by the complete linear system $\Oo_X(d_1,\dots ,d_k)$ with $d_i\ge 2$ for all $i$, $d_1\ge 3$ and $d_2\ge 3$. Then this embedding of $X$ is not secant defective.
\end{theorem}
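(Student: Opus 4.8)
The strategy is to strip off the $\PP^1$-factors one at a time, using Theorem \ref{i1} to add back a single factor at each step and the two-factor case of Galuppi--Oneto \cite{go} to start the process. For $0\le j\le k-2$ set
\[
Y_j:=\PP^{n_1}\times \PP^{n_2}\times (\PP^1)^{j},\qquad \Ll_j:=\Oo_{Y_j}(d_1,\dots ,d_{j+2}),
\]
the evident truncation of the given polarization, so that $Y_0=\PP^{n_1}\times\PP^{n_2}$ with $\Oo(d_1,d_2)$ and $Y_{k-2}=X$ with $\Oo_X(d_1,\dots ,d_k)$. I will prove by induction on $j$ that $Y_j$, embedded by the complete linear system $|\Ll_j|$, is not secant defective; the case $j=k-2$ is the assertion of the theorem.

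For $j=0$, $Y_0$ is a two-factor Segre--Veronese variety with both multidegrees $\ge 3$, hence not secant defective by \cite{go}; this is the only place where $d_1\ge 3$ and $d_2\ge 3$ are used, and it is the ``mild restriction on the multidegrees'' of the abstract. Now let $1\le j\le k-2$ and assume $Y_{j-1}$, embedded by $|\Ll_{j-1}|$, is not secant defective; write $Y:=Y_{j-1}$ and $\Ll:=\Ll_{j-1}$. Since $\Ll$ is an external tensor product of line bundles of the form $\Oo_{\PP^{n}}(d)$ with $d\ge 2>0$, it is very ample; and the K\"unneth formula together with $h^1(\PP^n,\Oo(d))=0$ for $d\ge 0$ gives $h^1(\Ll)=0$. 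Moreover, being non-defective, $Y$ satisfies in particular the single numerical hypothesis of Theorem \ref{i1}: $\sigma_z(Y)$ has dimension $(\dim Y+1)z-1$ for $z=\lfloor h^0(\Ll)/(\dim Y+1)\rfloor$. Applying Theorem \ref{i1} to the pair $(Y,\Ll)$, with the extra $\PP^1$-factor polarized by $\Oo_{\PP^1}(d_{j+2})$ (permissible as $d_{j+2}\ge 2$), yields that $Y\times\PP^1=Y_j$, embedded by $|\Ll_{j-1}\boxtimes\Oo_{\PP^1}(d_{j+2})|=|\Ll_j|$, is not secant defective. This closes the induction.

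The calculations that appear --- very ampleness of $\Ll_{j-1}$ and the vanishing $h^1(\Ll_{j-1})=0$ --- are routine; all the real content is imported from \cite{go} and from Theorem \ref{i1}. The one point that requires care is the matching of hypotheses: one must check that \cite{go} actually covers the entire range $d_1,d_2\ge 3$ (so that the base of the induction is unconditional) and that the $\PP^1$-factor in Theorem \ref{i1} may be taken with an arbitrary degree $\ge 2$, which is exactly what forces the requirement $d_i\ge 2$ for $i\ge 3$ in the statement (the stronger conditions $d_1,d_2\ge 3$ being needed only to launch the argument from \cite{go}).
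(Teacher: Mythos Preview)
Your inductive strategy is exactly the paper's: start from the two-factor case via \cite{go} and repeatedly apply Theorem~\ref{i1} to add one $\PP^1$ at a time. However, you have not verified all the hypotheses of Theorem~\ref{i1}. Besides $t\ge 2$, very ampleness, $h^1(\Ll)=0$, and the condition on $\sigma_{e_1}(Y)$, the theorem also requires $n:=\dim X\ge 3$ and, crucially, $\alpha:=h^0(\Ll)>n^2$. You list the hypotheses you check and explicitly say ``the one point that requires care is the matching of hypotheses,'' yet $\alpha>n^2$ is absent from your list. This is precisely the computation the paper carries out: for $j=1$ one must show $\binom{n_1+d_1}{n_1}\binom{n_2+d_2}{n_2}>(n_1+n_2+1)^2$, which the paper reduces (via monotonicity in $d_1,d_2$ and a derivative argument in $n_1,n_2$) to the single check $n_1=n_2=1$, $d_1=d_2=3$, where $\alpha=16>9$; for $j\ge 2$ the extra factor $\prod_{i=3}^{j+1}(d_i+1)\ge 3^{j-1}$ makes the inequality easy. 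The condition $n\ge 3$ is automatic since $n=n_1+n_2+j\ge 3$ for $j\ge 1$.

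Note also that your remark ``this is the only place where $d_1\ge 3$ and $d_2\ge 3$ are used'' is not quite right: with $d_1=d_2=2$ and $n_1=n_2=1$ one gets $\alpha=9=n^2$, so the inequality $\alpha>n^2$ itself relies on $d_1,d_2\ge 3$ (at least in the extremal case), not only the appeal to \cite{go}.
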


We prove a more interesting result, which applies to all varieties with a projective line as a factor and with respect to complete linear systems (Theorems \ref{i1}). 

Let $Y$ be an integral projective variety.
Set $X:= Y\times \PP^1$.  Let $\pi _1: X\to Y$ and $\pi_2: X\to \PP^1$ denote the $2$ projections. For any $\Ll\in \mathrm{Pic}(Y)$ and any $\Rr\in \mathrm{Pic}(\PP^1)$ set $\Ll\boxtimes \Rr:= \pi _1^\ast(\Ll)\otimes \pi_2^\ast(\Rr)$. Obviously, $\mathrm{Pic}(\PP^1) \cong \ZZ\Oo_{\PP^1}(1)$. Set $\Ll[t]:= \Ll\boxtimes \Oo_{\PP^1}(t)$.  Every line bundle on $X$ is of the form $\Ll[t]$ for a  uniquely determined
$\Ll\in \mathrm{Pic}(Y)$ and a unique $t\in \ZZ$ (\cite[Proposition 3]{fu}). Now assume $t\ge 0$ and $\alpha:= h^0(\Ll) >0$. The K\"{u}nneth formula gives $h^0(\Ll[t]) =(t+1)\alpha$ and $h^1(\Ll[t]) = (t+1)h^1(\Ll)$. From now on we also assume $h^1(\Ll)=0$.

\begin{theorem}\label{i1}
Let $Y$ be an integral projective variety.  Fix an integer $t\ge 2$.
Set $X:= Y\times \PP^1$. Let $\Ll$ be a very ample line bundle on $Y$ with $h^1(\Ll)=0$. Set $n:= \dim X$, $\alpha := h^0(\Ll)$ and $e_1:= \lfloor \alpha/n\rfloor$. Assume $n\ge 3$, $\alpha > n^2$ and  that the $e_1$-the secant variety
of $(Y,\Ll)$ has the expected dimension.  Then the pair $(X,\Ll[t])$ is not secant defective.
\end{theorem}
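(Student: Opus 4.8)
The plan is to combine Terracini's lemma with the Horace method, using the second projection $\pi_2\colon X\to\PP^1$ to manufacture the divisors along which to degenerate: every fibre of $\pi_2$ is a copy of $Y$ on which $\Ll[t]$ restricts to $\Ll$, so the single hypothesis on $(Y,\Ll)$ enters the argument through these fibres, and the condition $\alpha>n^2$ is exactly the slack that makes the bookkeeping close up. First I restate the statement cohomologically. Put $m:=h^0(\Ll[t])=(t+1)\alpha$; since $h^1(\Ll)=0$, the K\"{u}nneth formula gives $h^1(\Ll[t])=0$, so a union $Z$ of $z$ double points of $X$ has $h^0(\Ii_Z(\Ll[t]))-h^1(\Ii_Z(\Ll[t]))=m-(n+1)z$. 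By Terracini's lemma $(X,\Ll[t])$ is not secant defective if and only if, for every $z\ge 1$, a general such $Z$ satisfies $h^0(\Ii_Z(\Ll[t]))\cdot h^1(\Ii_Z(\Ll[t]))=0$; and since enlarging $Z$ by a general double point cannot increase $h^0(\Ii_Z(\Ll[t]))$ nor decrease $h^1(\Ii_Z(\Ll[t]))$, it suffices to prove (a) $h^1(\Ii_Z(\Ll[t]))=0$ when $Z$ is a general union of $z_0:=\lfloor m/(n+1)\rfloor$ double points, and (b) $h^0(\Ii_Z(\Ll[t]))=0$ when $Z$ is a general union of $z_0+1$ double points. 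The hypothesis on $(Y,\Ll)$ says $h^1(\Ii_{2A}(\Ll))=0$ for a general $A\subset Y$ with $|A|=e_1$, and by the same monotonicity this holds for every $A$ with $|A|\le e_1$; this is the only input from $Y$.

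For the Horace step, fix $t+1$ distinct points of $\PP^1$ and let $D_0,\dots,D_t$ be the corresponding fibres of $\pi_2$: each $D_i\cong Y$, $\Oo_X(D_i)\cong\Oo_X(0,1)$, $\Ll[t]|_{D_i}\cong\Ll$, and $\Ll[t]\otimes\Oo_X(-D_0-\cdots-D_j)\cong\Ll[t-1-j]$, so that peeling off all $t+1$ fibres lands on $\Ll[-1]$, which has $h^0=h^1=0$. I would distribute the double points of $Z$ among the fibres, with general support and at most $e_1$ on each $D_i$. This is possible for $z\in\{z_0,z_0+1\}$: writing $\alpha=ne_1+\epsilon$ with $0\le\epsilon\le n-1$, the hypothesis $\alpha>n^2$ forces $e_1\ge n$, hence $\epsilon<e_1$ and $\alpha<(n+1)e_1$, so $z_0\le(t+1)\alpha/(n+1)<(t+1)e_1$ and therefore $z_0+1\le(t+1)e_1$. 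Applying the iterated Castelnuovo sequences $0\to\Ii_{\Res_{D_i}W}(\Ll[t-1])\to\Ii_W(\Ll[t])\to\Ii_{W\cap D_i}(\Ll)\to 0$ along the list $D_0,D_1,\dots$, the trace on each $D_i\cong Y$ is a general union of at most $e_1$ double points of $Y$, whose $h^1$ against $\Ll|_{D_i}\cong\Ll$ vanishes by the recorded form of the hypothesis. The difficulty is that each restriction to $D_i$ also leaves reduced points of $D_i$ in the residue: lying on a single fibre, they survive the restrictions to the remaining fibres and pile up against $\Ll[-1]$, where they would force $h^1\ne 0$.

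The way past this is to reabsorb those reduced points. When the residual multidegree still permits it, one restricts to $D_i$ a second time and uses $\Res_{D_i}(p)=\emptyset$ for $p\in D_i$; when it does not, one replaces the double points sitting on $D_i$ by the differential-Horace degeneration of Alexander--Hirschowitz, which channels their parts normal to $D_i$ into the trace --- now a mild enlargement of a general union of $\le e_1$ double points of $Y$, still with $h^1=0$ against $\Ll$ because the deficiency $\epsilon$ leaves room for it --- rather than into the surviving residue. With the residue thereby emptied and every trace imposing independent conditions, the long exact sequences give (a); running the same degeneration with one extra double point --- no fibre overflows, as $z_0+1\le(t+1)e_1$, and the enlarged traces now account for the full length $(n+1)(z_0+1)\ge m$ distributed over the $t+1$ copies of $\Ll$, each being exhausted --- gives (b).

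The main obstacle is precisely this reabsorption: one must choose, fibre by fibre, a degeneration of the double points on $D_i$ whose trace on $D_i$ stays contained in a general union of $\le e_1$ double points of $Y$, so that the hypothesis on $(Y,\Ll)$ applies without change, while simultaneously the residue on $X$ --- together with what the earlier fibres left behind --- fits into the residual systems $\Ll[t-1-j]$ without any $h^1$; verifying that a single choice of degeneration meets both demands, for all $D_0,\dots,D_t$ and for both critical values of $z$, is the technical heart of the argument. The role of $\alpha>n^2$ is to furnish the slack for this: each fibre can swallow a full batch of $e_1$ double points, and the $t+1$ deficiencies $\epsilon<n$ together stay inside the budget $m=(t+1)\alpha$. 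The hypotheses $n\ge 3$ and $t\ge 2$ guarantee that there are enough fibres, and enough room in the residual linear systems, to carry the degeneration through.
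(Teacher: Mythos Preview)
Your outline correctly identifies the framework (Terracini plus Horace/differential Horace along fibres of $\pi_2$) and the central obstacle, but it stops precisely where the work begins. You write that ``verifying that a single choice of degeneration meets both demands\dots is the technical heart of the argument'' and then do not carry it out. That is the gap.

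More concretely, your scheme of placing the $z$ double points on $t+1$ distinct fibres $D_0,\dots,D_t$ and peeling them all off cannot close as stated. After the $t+1$ restrictions you are at $\Ll[-1]$, and for case~(a) you need $h^1$ of the residue to vanish; but any point left in the residue against $\Ll[-1]$ forces $h^1>0$. Standard Horace leaves the reduced point $\{p\}\subset D_i$; differential Horace leaves $(2p,D_i)\subset D_i$; either way the residue is nonempty, and restricting to $D_j$ with $j\ne i$ does not touch it. Your proposed remedy ``restrict to $D_i$ a second time'' costs an extra twist per fibre, and with only $t+1$ twists available you cannot afford this for all fibres while also accommodating $z_0\sim(t+1)\alpha/(n+1)$ double points at $\le e_1$ per fibre: the arithmetic you would need, roughly $\lfloor(t+1)/2\rfloor\cdot e_1\ge z_0$, fails. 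Your description of differential Horace as ``channelling the normal part into the trace'' is also not what the lemma does: the trace receives the reduced point and the residue receives $(2p,D_i)$, not the other way around.

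The paper takes a different route that sidesteps this accumulation problem. It uses a \emph{single} fibre $H$ and reduces by induction on $t$ two steps at a time, through explicitly formulated intermediate statements $B(t,z)$ (a mixed scheme of double points of $X$, double points of $H$, and reduced points of $H$ against $\Ll[t-1]$) and $C(t,z)$ (double points of $X$ against $\Ll[t-2]$). One shows $C(t)\Rightarrow B(t)\Rightarrow A(t)$ and $A(t-2)\Rightarrow C(t)$; the base case $t=2$ is handled directly by a delicate sequence of numerical claims where $n\ge 3$ and $\alpha>n^2$ are actually used (and where Example~1 shows $n=2$ genuinely fails). The point is that the mixed scheme in $B(t,z)$ is designed so that one more restriction to $H$ absorbs the leftover reduced points \emph{and} lands on a statement ($C(t,z)$, i.e.\ essentially $A(t-2)$) already controlled by induction. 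Your many-fibre strategy lacks an analogue of this absorption step; to make your approach work you would have to invent one, and that is exactly the content you have deferred.
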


We assume $n\ge 3$  in Theorem \ref{i1} because if $n=t=2$ no lower bound on $\alpha$  may work, as shown by Example \ref{ex1}.

Theorem \ref{minus} is an easy consequence of Theorem \ref{i1}.

Our tools work even if $(Y,\Ll)$ is secant defective, adding conditions on $t$ and/or $\alpha$. As an example we prove one case in which we only assume that $\sigma_{\lfloor \alpha/n\rfloor-1}(Y)$ has the expected dimension.
(Theorem \ref{i1.0}). We also see that even for $t=1$ we may get non-trivial results (Proposition \ref{u1}). 

We often use the Differential Horace Lemma (\cite{ah1,ah}) and an inductive procedure (the Horace Method) but from top to bottom with smaller and smaller zero-dimensional schemes to be handled. This approach may be considered as a controlled asymptotic tool which do not require the low cases to start the inductive procedure (\cite{bb,bbcs}, \cite[Lemma 3]{c}). A long and detailed explanation
of this method is contained in \cite[pp. 1005-1008]{bb}; see in particular the diagram of logical implications in \cite[p. 1058]{bb}. Then sometimes the low cases may be proved, e.g. with a computer assisted proof (\cite{bbcs,d}). However, a standard use of this tool would only give a very weak result (e.g. Theorem \ref{i1} only for $t\ge \dim Y+2$ and hence the inductive proof to get Theorem \ref{minus} for $k\ge 4$ would require very large $t$). 

For our proof of Theorem \ref{i1} the key part is the proof of the case $t=2$. Then the cases $t>2$ have a short inductive proof using Lemmas \ref{a2}, \ref{a3} and \ref{a4.0}.

Our method works only for some non-complete linear systems (see Remark \ref{fin1}).

The author has no conflict of interests.
\section{Preliminaries}
Let $W$ a projective variety and $D$ an effective Cartier divisor of $W$.
For any $p\in W_{\reg}$ let $(2p,W)$ denote the closed subscheme of $W$ with $(\Ii_{p,W})^2$ as its ideal sheaf. We have $\deg ((2p,W)) =\dim W+1$ and $(2p,W)_{\red} =\{p\}$. For any finite set $S\subset W_{\reg}$ set $(2S,W):= \cup_{p\in S} (2p,W)$.
We often write $2p$ and $2S$ instead of $(2p,X)$ and $(2S,X)$. For any zero-dimensional scheme $Z\subset W$ let $\Res_D(Z)$ denote the residual scheme of $Z$ with respect to $D$, i.e. the closed subscheme of $W$ with $\Ii_Z:\Ii_D$ has its ideal sheaf.
We have $\deg (Z) =\deg (\Res_D(Z)) +\deg (Z\cap D)$, $\Res_D(Z)\subseteq Z$, $\Res_D(Z) =Z$ if $Z\cap D=\emptyset$, $\Res_D(Z) =\emptyset$ if $Z\subset D$ and $\Res_D(Z) =\Res_D(A)\cup \Res_D(B)$ if $Z=A\cup B$ and $A\cap B=\emptyset$. If $p\in D_{\reg}\cap W_{\reg}$, then $\Res_D((2p,W)) = \{p\}$ and $(2p,W)\cap D =(2p,D)$. For any line bundle $\Rr$ on $W$ there is an exact sequence
\begin{equation}\label{eqp1}
0 \to \Ii_{\Res_D(Z)}\otimes \Rr(-D)\to \Ii_Z\otimes \Rr \to \Ii_{Z\cap D,D}\otimes \Rr_{|D} \to 0
\end{equation}
of coherent sheaves on $W$ which we call the {\it residual sequence of $D$}. Fix a positive integer $z$. By the Terracini Lemma (\cite[Cor. 1.11]{a}, \cite[5.3.1.1]{l}) the integer $\dim \sigma _z(W)$ is the codimension of the linear span of the zero-dimensional scheme $(2S,W)$, where $S$ is a general subset of $W$ of cardinality $z$. Thus if the embedding of $W$ is induced by the complete linear system $|\Rr|$, then $\dim \sigma _z(W) =h^0(\Rr)-1-h^0(\Ii _{(2S,W)}\otimes \Rr)$.
Hence $\dim \sigma _z(W) = z(\dim W+1)-1$ if and only if $h^1(\Ii _{(2S,W)}\otimes \Rr) =h^1(\Rr)$.

Now we describe the so-called  Differential Horace Lemma (\cite{ah1,ah}).

\begin{remark}\label{dh1}
Let $E\subset W$ be a zero-dimensional scheme. Fix $i\in \{0,1\}$ and an integer $g>0$. Let  $D$ be an integral divisor of $W$. Let $F\subset D$ be a general subset of $D$ with $\#F=g$. Suppose you want to prove that $h^i(\Ii_{E\cup (2S,W)}\otimes \Rr)=0$ for a general $S\subset W$ such that $\#S =g$. 
It is sufficient to prove that $h^i(H,\Ii _{(E\cap D)\cup F}\otimes \Rr_{|D})=0$ and $h^i(W,\Ii_{\Res_D(E)\cup (2F,D)}\otimes \Rr(-D))=0$ (\cite{ah1,ah}).
\end{remark}

Suppose there is a line bundle $\Rr$ on $W$ such that the embedding $W\subset \PP^r$ is induced by the complete linear system $|\Rr|$. We call the secant varieties of the embedded variety $W\subset \PP^r$ the secant varieties of the pair $(W,\Rr)$.

\begin{remark}\label{ovv1}
Let $W\subset \PP^r$ be an integral and non-degenerate variety. Set $n:= \dim W$, $z_1:= \lfloor (r+1)/(n+1)/\rfloor$ and $z_2:=\lceil (r+1)/(n+1)\rceil$. Suppose that $\sigma _{z_1}(W)$ and $\sigma _{z_2}(W)$ have the expected dimension. Since $(n+1)z_2\ge r+1$, $\sigma _x(W) =\PP^r$ for all $x\ge z_2$. Either $z_1=z_2$ or $z_1=z_2-1$. Let $S\subset W_{\reg}$ be a general subset such that $\#S=z_1$. Since $\dim \sigma _{z_1}(W) =(n+1)z_1-1$, the Terracini Lemma
gives $\dim (2S,W) =(n+1)z_1-1$, i.e. the scheme $(2S,W)$ is linearly independent.
Hence $(2A,W)$ is linearly independent for any $A\subset S$. The Terracini Lemma gives $\dim \sigma _y(W) =(n+1)y-1$ for all $1\le y<z_1$ (a similar statement is proved in \cite[Prop. 2.1(i)]{a1}). Thus $W$ is not secant defective. Note that $z_2$
is the minimal integer $z$ such that $(\dim W+1)z \ge r+1$. Thus to prove that $W$ is not secant defective it is sufficient to test all positive integers $z$ such that $(\dim W+1)z\le r+1+\dim W$.
\end{remark}

\section{The proofs and related results}
Set  $X:= Y\times \PP^1$ and $n:= \dim X$. We fix $o\in \PP^1$ and set $H:= Y\times \{o\}$. The set $H$ is an effective Cartier divisor of $X$ and $H\cong Y$.

For all positive integers $t$ and $z$ call $A(t,z)$ the following statement:

\quad $A(t,z)$: We have $h^0(\Ii _{2S}\otimes \Ll[t]) =\max \{\alpha(t+1)-(n+1)z,0\}$ for a general $S\subset X$ such that $\#S =z$.

By the Terracini Lemma $A(t,z)$ is true if and only if the $z$-secant variety of the pair $(X,\Ll[t])$ has the expected dimension.
Since $h^1(\Ll[t])=0$, $A(t,z)$ is equivalent to $h^1(\Ii _{2S}\otimes \Ll[t]) =\max \{0, (n+1)z -\alpha(t+1)\}$ for a general $S\subset X$ such that $\#S =z$.

We say that $A(t)$ is true if $A(t,z)$ are true for all $z\in \{\lfloor (t+1)\alpha/(n+1)\rfloor,\lceil (t+1)\alpha/(n+1)\rceil\}$. Remark \ref{ovv1} shows that $A(t)$ is true if and only if $A(t,z)$ is true for all positive integers $z$.

Write $\alpha = ne_1+f_1$ with $e_1, f_1$ integers and $0\le f_1\le n$, i.e. set $e_1:= \lfloor \alpha/n\rfloor$ and $f_1:= \alpha -ne_1$.

The following example is well-known (\cite[p. 1457]{lp}).
\begin{example}\label{ex1}
Fix a positive integer $a$, $t=2$ and $(Y,\Ll)=(\PP^1,\Oo _{\PP^1}(2a))$. We have $n=2$, $\alpha =2a+1$, $(X,\Ll[2])$ is secant defective with only defective the $(2a+1)$-secant variety. Indeed, a general $S\subset \PP^1\times \PP^1$ with $\#S =2a+1$
is contained in the singular locus of a unique $D\in |\Oo_{\PP^1\times \PP^1}(2a,2)|$, the double curve $2T$ with $T$ the unique element of $|\Ii_S(a,1)|$.
\end{example}

\begin{remark}\label{aa2}
It is very important for our proof  that $f_1\le e_1$. Since $f_1\le n-1$, it is sufficient to assume $e_1\ge n-1$. If $\dim \sigma_{e_1}(Y) =ne_1-1$, it is sufficient to assume $\alpha \ge n(n-1)$, which is quite mild.
\end{remark}

For all positive integers $t$ and $z$ we call $B(t,z)$ the following statement:

\quad $B(t,z)$: Either $z< e_1+f_1$ or $h^0(\Ii_E\otimes \Ll[t-1]) =\max \{0,t\alpha -(n+1)(z-e_1-f_1)-ne_1-f_1\}$, where $E\subset X$ is a general union of $z-e_1-f_1$ double points of $X$, $f_1$ double points of $H$ and $e_1$ points of $H$.

For all $t\ge 2$ we call $C(t,z)$ the following statement:

\quad $C(t,z)$: We have $h^0(\Ii_{W}\otimes \Ll[t-2])\le \max \{0,(t-1)\alpha -\deg (W)\}$, where $W$ is a general union of $\max \{0,z-e_1-f_1\}$ double points of $X$.

Note that $C(2,z)$ is true if and only if $z\le e_1+f_1$.

We say that $B(t)$ (resp. $C(t)$) is true if $B(t,z)$ (resp. $C(t,z)$) is true for all $z$.

Using the Differential Horace Lemma it is quite easy to get $A(t)$ if we know $B(t)$. A key step is to get $B(t)$ knowing that $C(t)$ is true. Indeed, for $z\le \lceil h^0(\Ll[t])/(n+1)\rceil$ the integer $z-e_1-f_1$ usually degree much smaller
than $\lfloor h^0(\Ll[t-2])/(n+1)\rfloor$ and hence it should be `` easy '' to prove that $h^1(\Ii_W\otimes \Ll[t-2])=0$. But of course, $t-2<t$ and so to use this strategy we need to prove that $\lfloor h^0(\Ll[t-2])/(n+1)\rfloor - (z-e_1-f_1)$ is very large (depending on $t$). For $t=2$ we also need another trick. Then we prove $C(3)$. Then Lemmas \ref{a2}, \ref{a3}, \ref{a4.0} give the case $t\ge 4$ by induction on $t$.

\begin{lemma}\label{a2}
Assume that the $e_1$-secant variety of $(Y,\Ll)$ has dimension $ne_1-1$. If $t\ge 2$ and $B(t,z)$ is true, then $A(t,z)$ is true. 
\end{lemma}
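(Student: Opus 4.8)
The plan is to deduce $A(t,z)$ from $B(t,z)$ by a single application of the Differential Horace Lemma (Remark \ref{dh1}) along the divisor $H = Y\times\{o\}$, using $H\cong Y$ together with the hypothesis that $\sigma_{e_1}(Y,\Ll)$ has the expected dimension. Recall that $A(t,z)$ asks for $h^1(\Ii_{2S}\otimes\Ll[t]) = \max\{0,(n+1)z-\alpha(t+1)\}$ for general $S\subset X$ with $\#S=z$, equivalently (since $h^1(\Ll[t])=0$ and the scheme has the right degree) that $2S$ imposes independent conditions on $|\Ll[t]|$ up to the expected deficiency. The idea is to specialize the general $z$ double points $2S$ so that $e_1+f_1$ of the points lie on $H$ and the remaining $z-e_1-f_1$ stay general in $X$, and to split off from those $e_1+f_1$ specialized double points the appropriate ``differential'' data: $f_1$ of them become honest double points $(2p,H)$ inside $H$ while $e_1$ of them contribute only a reduced point of $H$ in the trace (this is exactly the $i=1$ Horace trick, where $(2p,X)\cap H = (2p,H)$ and $\Res_H((2p,X)) = \{p\}$).

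Concretely, I would write, for $F\subset H$ a general set of the appropriate size, the two Horace conditions: (i) a \emph{trace} condition on $H\cong Y$ for a scheme consisting of $f_1$ double points of $H$ and $e_1$ reduced points of $H$, twisted by $\Ll[t]_{|H}\cong\Ll$; and (ii) a \emph{residual} condition on $X$ for $\Res_H$ of the specialized family, twisted by $\Ll[t](-H) = \Ll[t-1]$, which is precisely a general union of $z-e_1-f_1$ double points of $X$, $f_1$ double points of $H$, and $e_1$ simple points of $H$. The residual condition (ii) is exactly the vanishing asserted by $B(t,z)$ (one checks the degree count $\deg = (n+1)(z-e_1-f_1)+ (n)f_1 + e_1 \cdot 1$... more precisely $nf_1 + f_1 = (n+1)f_1$ wait — a double point of $H\subset X$ has degree $\dim H + 1 = n$, so the total degree is $(n+1)(z-e_1-f_1) + nf_1 + e_1$, matching the expression $(n+1)(z-e_1-f_1)+ne_1+f_1$ after the bookkeeping $ne_1+f_1=\alpha$ is \emph{not} used here; in any case it is engineered to coincide with $B(t,z)$). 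The trace condition (i) is where the hypothesis on $(Y,\Ll)$ enters: $f_1$ double points plus $e_1$ simple points on $Y$ impose independent conditions on $|\Ll|$ — this follows because $\sigma_{e_1}(Y,\Ll)$ has the expected dimension forces $(2A,Y)$ linearly independent for the general $e_1$-set $A$ (as in Remark \ref{ovv1}), and $f_1\le e_1$ (Remark \ref{aa2}) lets us realize $f_1$ double points and $e_1-f_1$ ... rather, $f_1$ doubles and $e_1$ simples fit inside the $e_1$ independent double points plus a bit, so the trace scheme of degree $nf_1+e_1 \le n e_1 + e_1$... the clean statement is that its degree is at most $\alpha = h^0(\Ll)$ and it is contained in (a degeneration of) the independent scheme $(2A,Y)$, hence imposes independent conditions, i.e. $h^0(H,\Ii_{\mathrm{trace}}\otimes\Ll) = \max\{0,\alpha - (nf_1+e_1)\}$.

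The main obstacle I anticipate is the verification of the trace condition (i) in the borderline ``square'' case, where $nf_1 + e_1$ is close to $\alpha$: one must ensure not merely that a general such scheme on $Y$ is linearly independent, but that it remains so after the specific specialization forced by Horace (the $e_1$ points $F$ are general \emph{in $H$}, so that is fine, but one must check that $f_1$ general double points of $Y$ together with $e_1$ further general simple points of $Y$ are independent given only that $e_1$ general double points are independent). Since a simple point imposes at most as many conditions as a double point and specialization can only drop $h^0$, the containment/degeneration argument of Remark \ref{ovv1} handles this, provided $f_1 + (\text{something}) \le e_1$ — and this is exactly why $f_1\le e_1$ was flagged as ``very important'' in Remark \ref{aa2}. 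Once both Horace inputs are in place, Remark \ref{dh1} gives $h^1(\Ii_{E\cup(2S,X)}\otimes\Ll[t])=0$ for the relevant $E$ (here $E=\emptyset$, all of the scheme being the $z$ double points, half-specialized), which after matching degrees is the statement $A(t,z)$; the only remaining care is the trivial cases $z<e_1+f_1$ and $z$ so large that everything is $0$, which are immediate from semicontinuity and the degree bounds.
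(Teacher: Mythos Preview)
Your overall strategy is exactly the paper's: apply the Differential Horace Lemma at $H$ so that the residual is precisely the scheme appearing in $B(t,z)$. However, you have the roles of the $e_1$ and the $f_1$ points swapped, and as written your trace and residual are incompatible with one another.

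To obtain as residual the scheme of $B(t,z)$ (namely $z-e_1-f_1$ general double points of $X$, $f_1$ double points of $H$, and $e_1$ simple points of $H$), the $f_1$ points must be the ones handled \emph{differentially} (trace $=$ reduced point, residual $=(2p,H)$), while the $e_1$ points are specialized to $H$ in the \emph{ordinary} way (trace $=(2p,H)$, residual $=$ reduced point). Consequently the trace on $H\cong Y$ consists of $e_1$ double points of $Y$ together with $f_1$ simple points, of degree $ne_1+f_1=\alpha$ --- not $f_1$ doubles and $e_1$ simples of degree $nf_1+e_1$ as you wrote. This is the crux of the argument: since the $e_1$ general double points already have $h^1=0$ on $Y$ by the hypothesis on $\sigma_{e_1}(Y)$, and $f_1$ further general points bring the degree up to exactly $\alpha=h^0(\Ll)$, the trace scheme $A$ satisfies $h^0(H,\Ii_{A,H}\otimes\Ll)=h^1(H,\Ii_{A,H}\otimes\Ll)=0$. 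Only then does Horace give $h^i(\Ii_Z\otimes\Ll[t])=h^i(\Ii_{\text{residual}}\otimes\Ll[t-1])$ for \emph{both} $i=0,1$, so that $B(t,z)$ (which is a statement about $h^0$ being the expected value, hence about whichever of $h^0,h^1$ happens to vanish) yields $A(t,z)$ in both the subabundant and the superabundant ranges of $z$. With your trace of degree strictly less than $\alpha$ you would at best control $h^1$, and in any case the residual compatible with that trace would have $e_1$ double points of $H$ and $f_1$ simple points, which is not the scheme to which $B(t,z)$ applies.
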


\begin{proof}
Let $Z\subset X$ be a general union of $z$ double points. 

First assume $z\ge e_1+f_1$. Let $Z'\subset X$ be a general union of $z-e_1-f_1$ double points. Fix a general $S\subset H$ such that $\#S =e_1+f_1$ and write $S=S'\cup S''$ with $\#S' =e_1$ and $\#S''=2$.
Set $A:= (2S',H)\cup S''$ and $B:= S'\cup (2S'',H)$. Note that $\deg (A)=\alpha$. Since $\dim \sigma _{e_1}(Y) =e_1n-1$, $h^1(H,\Ii _{(2S',H)}\otimes \Ll[t]_{|H}) =0$. Since $S''$ is general in $H\cong Y$
and $\deg (A)=\alpha$, 
$h^i(H,\Ii _{A,H}\otimes \Ll[t]_{|H}) =0$, $i=0,1$. The Differential Horace Lemma (Remark \ref{dh1}) gives $h^i(\Ii _Z\otimes \Ll[t])) =h^i(\Ii_{Z'\cup S'\cup (2S'',H)}\otimes \Ll[t-1])$ for $i=0,1$. Thus $B(t,z)$ implies $A(t,z)$ if $z\ge e_1+f_1$.

Now assume  $z\le e_1+f_1-1$. The proof of the case $z\ge e_1+f_1$ works taking $Z'= \emptyset$, $\#S' =\min \{e_1,z\}$ and $\#S'' = z-\#S'$.
\end{proof}
\begin{lemma}\label{a3}
Assume $\alpha >n^2$, $t\ge 3$ and that the $e_1$-secant variety of $(Y,\Ll)$ has dimension $ne_1-1$. Take $z\le \lceil (t+1)\alpha /(n+1)\rceil$. If $C(t,z)$ is true and either $z\le e_1+f_1$ or $A(t-2,z-e_1-f_1)$ is true, then $B(t,z)$ and $A(t,z)$ are true.
\end{lemma}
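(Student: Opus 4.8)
The plan is to run the Differential Horace Lemma on the divisor $H\cong Y$ twice, peeling off two copies of $H$ and thereby dropping the twist from $\Ll[t]$ to $\Ll[t-2]$, and then feed in the two hypotheses $C(t,z)$ (which controls the residual after two peelings) and $A(t-2,z-e_1-f_1)$ (which controls the deeper secant problem). Concretely, let $Z\subset X$ be a general union of $z-e_1-f_1$ double points, $f_1$ double points of $H$ and $e_1$ points of $H$ — this is the scheme appearing in $B(t,z)$. I would apply Remark \ref{dh1} with $D=H$ to the corresponding collection of $z$ double points in $A(t,z)$-type position, choosing the $g=e_1+f_1$ specialized points to lie on $H$ in the pattern $(2S',H)\cup S''$ versus $S'\cup(2S'',H)$ exactly as in the proof of Lemma \ref{a2}; the trace computation on $H$ uses that $\sigma_{e_1}(Y)$ has the expected dimension together with $\alpha>n^2$ (hence $e_1\geq n-1\geq f_1$, the inequality flagged as essential in Remark \ref{aa2}), so the trace part of Horace vanishes and the problem reduces to the residual on $X$ with bundle $\Ll[t-1]$. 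This is precisely the statement $B(t,z)$, so the first task is really to show $C(t,z)$ (plus the secant hypothesis) implies $B(t,z)$; then $A(t,z)$ follows from $B(t,z)$ by Lemma \ref{a2}.

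For the implication $C(t,z)\Rightarrow B(t,z)$ I would again use Differential Horace on $D=H$, this time applied to the scheme $E$ of $B(t,z)$ with bundle $\Ll[t-1]$. The scheme $E$ already contains $f_1$ double points and $e_1$ simple points lying on $H$; specializing a general subset $F\subset H$ of the $z-e_1-f_1$ ambient double points onto $H$ (in the differential pattern) degenerates $E$ to a scheme whose trace on $H$ is a general union of $f_1$ double points, $e_1$ simple points, and $\#F$ further double points of $Y$, cut out by $\Ll[t-1]_{|H}=\Ll$ — wait, one must be careful: the relevant trace is with respect to $\Ll[t-1]$ restricted, and the residual drops to $\Ll[t-2]$. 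The trace piece on $H$ is handled by the secant non-defectivity of $(Y,\Ll)$ together with the bookkeeping $\alpha = ne_1+f_1$ with $f_1\le e_1$, which makes $(2S',H)\cup(\text{extra points})$ land on the nose of $\lfloor\alpha/n\rfloor$ so that $h^0$ and $h^1$ of the trace simultaneously vanish or match the numeric bound; the residual piece is a general union of $z-e_1-f_1$ double points with bundle $\Ll[t-2]$, and its cohomology is exactly what $C(t,z)$ asserts. Matching the Euler characteristics $t\alpha-(n+1)(z-e_1-f_1)-ne_1-f_1 = (t-1)\alpha - (n+1)(z-e_1-f_1) + (\alpha - ne_1-f_1)$ and noting $\alpha-ne_1-f_1=0$ shows the numbers line up, so $C(t,z)$ gives the residual vanishing and hence $B(t,z)$.

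Finally, when $z\le e_1+f_1$ the statement $B(t,z)$ is vacuous on the left branch ("either $z<e_1+f_1$") except at $z=e_1+f_1$, where $z-e_1-f_1=0$ and the scheme $E$ is supported entirely on $H$; there one checks directly from $h^1(\Ll)=0$ and the secant hypothesis that $h^0(\Ii_E\otimes\Ll[t-1])$ hits the asserted value, and $A(t,z)$ for such small $z$ follows from Lemma \ref{a2} as in its proof. I expect the main obstacle to be the careful tracking of the three species of points (ambient double points, double points on $H$, simple points on $H$) through two successive Horace degenerations, making sure that at each step the trace on $H$ is genuinely general for the restricted bundle and that the residual scheme has exactly the shape required by the next hypothesis; the numerical matching of the polynomial $\max\{0,(t-1)\alpha-\deg W\}$ in $C(t,z)$ with the output of $B(t,z)$ is routine once the degeneration is set up, but getting the set-theoretic genericity and the $f_1\le e_1$ inequality to cooperate is where the real care lies. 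The constraint $z\le\lceil(t+1)\alpha/(n+1)\rceil$ guarantees $z-e_1-f_1\le\lceil(t-1)\alpha/(n+1)\rceil$ (using $t\ge 3$ and $\alpha>n^2$), so that $A(t-2,z-e_1-f_1)$ is meaningful and, via Remark \ref{ovv1}, available from $A(t-2)$.
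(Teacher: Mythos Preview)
Your reduction ``$C(t,z)\Rightarrow B(t,z)$ via a second Differential Horace step on $H$'' has a genuine gap: the trace does \emph{not} land on the nose. In the scheme $E$ of $B(t,z)$ the part supported on $H$ consists of $f_1$ double points of $H$ and $e_1$ simple points, so $E\cap H=(2S'',H)\cup S'$ has degree $nf_1+e_1$, which is strictly less than $\alpha=ne_1+f_1$ whenever $e_1>f_1$ (the typical case). Hence $h^0$ of the trace is positive and the residual sequence alone cannot give $h^0(\Ii_E\otimes\Ll[t-1])=0$ when that is what $B(t,z)$ requires. Your suggestion to fix this by specializing some subset $F$ of the ambient double points onto $H$ differentially does not rescue the argument: in the Differential Horace pattern the trace gains $\#F$ \emph{simple} points (not double, as you wrote) and the residual becomes $W'\cup(2F,H)$ with $W'$ now having only $z-e_1-f_1-\#F$ double points of $X$, so the residual no longer matches the scheme in $C(t,z)$. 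Your Euler-characteristic identity $t\alpha-(n+1)(z-e_1-f_1)-ne_1-f_1=(t-1)\alpha-(n+1)(z-e_1-f_1)$ is correct but only tells you the numbers are consistent; it does not produce the needed vanishing.

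The paper's proof handles exactly this shortfall with an idea you are missing: a base-locus argument. After using the ordinary residual sequence to get $h^1(\Ii_{W\cup(2S'',H)}\otimes\Ll[t-1])=0$ from $h^1(\Ii_W\otimes\Ll[t-2])=0$, one asks for the maximal number $a$ of general points of $H$ that can be added while keeping $h^1=0$. If $a<e_1$, then adding one more general point of $H$ fails to impose a condition, so $H$ lies in the base locus of $|\Ii_{W\cup(2S'',H)\cup A}\otimes\Ll[t-1]|$; taking residuals forces $h^0$ to equal $h^0(\Ii_W\otimes\Ll[t-2])=(t-1)\alpha-\deg(W)$, which yields the equation $a+nf_1=\alpha$, contradicting $a<e_1$ and $\alpha=ne_1+f_1$ with $e_1\ge f_1$. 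The case $h^0(\Ii_W\otimes\Ll[t-2])=0$ is then excluded by the numerical bound $z\le\lceil(t+1)\alpha/(n+1)\rceil$ together with $\alpha>n^2$. Without this base-locus step (or an equivalent device) the passage from $C(t,z)$ to $B(t,z)$ does not go through.
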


\begin{proof}
By Remark \ref{ovv1} it is sufficient to check all positive integers $z$ such that $(n+1)z \le (t+1)\alpha +n$.
By Lemma \ref{a2} it is sufficient to prove $B(t,z)$. By the definition of $B(t,z)$ we may assume $z\ge e_1+f_1$. Let $W\subset X$ be a general union of $z-e_1-f_1$ double points. Take a general $S\subset H$ such that
$\#S =e_1+f_1$ and write $S =S'\cup S''$ with $\#S'= e_1$ and $\#S''=f_1$. By assumption $h^0(\Ii_W\otimes \Ll[t-2])\le \max \{0,(t-1)\alpha -\deg(W)\}$,
i.e. either $h^0(\Ii _W\otimes \Ll[t-2])=0$ or $h^1(\Ii _W\otimes \Ll[t-2]) =0$. Assume that $B(t,z)$ fails. Hence $h^1(\Ii _{W\cup (2S'',H)\cup S'}\otimes \Ll[t-1]) >0$. 

\quad (a) Assume $h^1(\Ii _W\otimes \Ll[t-2])) =0$. Since $f_1\le e_1$, Remark \ref{ovv1} gives $\dim \sigma _{f_1}(Y)) =nf_1-1$. Thus $h^1(H,\Ii _{(2S'',H),H}\otimes \Ll[t-1]_{|H})=0$. Thus the residual exact sequence of $H$
gives $h^1(\Ii _{W\cup (2S'',H)}\otimes \Ll[t-1]) =0$. Let $a$ be the maximal integer such that $h^1(\Ii _{W\cup (2S'',H)\cup A}\otimes \Ll[t-1]) =0$, where $A$ is a general subset of $H$ with cardinality $a$. By assumption $a<e_1$.
Take a general $p\in H$. The definition of $a$ gives $h^1(\Ii _{W\cup (2S'',H)\cup A\cup \{p\}}\otimes \Ll[t-1]) >0$. Since $h^1(\Ii _{W\cup (2S'',H)\cup A}\otimes \Ll[t-1]) =0$, we see that $H$ is in the base locus of $|\Ii _{W\cup (2S'',H)\cup A}\otimes \Ll[t-1]|$,
i.e, (since $\Res_H(W\cup (2S'',H)\cup A) =W$ and $h^1(\Ii _W\otimes \Ll[t-2]) =0$), $h^0(\Ii _{W\cup (2S'',H)\cup A}\otimes \Ll[t-1]) =(t-1)\alpha -\deg (W)$. We get $a+nf_1=\alpha$, which is false because $a<e_1$, $e_1\ge f_1$ (Remark \ref{aa2})
and $\alpha =ne_1+f_1$.

\quad (b) Assume $h^0(\Ii _W\otimes \Ll[t-2]) =0$. We get $(n+1)(z-e_1-f_1)\ge (t-1)\alpha$. By assumption $(n+1)z \le (t+1)\alpha +n$. Hence $2\alpha \le (n+1)e_1+(n+1)f_1+n$. We have $ne_1+f_1 =\alpha$. Thus $\alpha\le nf_1+n\le n^2$, a contradiction.\end{proof}

\begin{lemma}\label{a4.0}
Fix an integer $t\ge 3$ and assume $A(t-2)$. Then $C(t)$ is true.
\end{lemma}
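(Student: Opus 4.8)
The plan is to observe that $C(t,z)$ is, up to replacing an equality by a one-sided bound, nothing other than the instance $A(t-2,\,z-e_1-f_1)$ of the hypothesis, so the argument should just be a short unwinding of the definitions.

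First I would dispose of the trivial range $z\le e_1+f_1$. There the scheme $W$ is empty, so $h^0(\Ii_W\otimes \Ll[t-2])=h^0(\Ll[t-2])$; since $t\ge 3$ we have $t-2\ge 1$, and the K\"unneth formula together with $h^1(\Ll)=0$ gives $h^0(\Ll[t-2])=(t-1)\alpha=\max\{0,(t-1)\alpha-\deg W\}$, so $C(t,z)$ holds (in fact with equality). Next I would treat $z\ge e_1+f_1+1$. Put $m:=z-e_1-f_1\ge 1$, so that $W=(2S,X)$ for a general $S\subset X$ with $\#S=m$. Each double point of $X$ has degree $\dim X+1=n+1$, hence $\deg W=(n+1)m$ and the right-hand side of $C(t,z)$ equals $\max\{0,(t-1)\alpha-(n+1)m\}$. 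Now $A(t-2)$ is true by assumption, and since $t-2\ge 1$, Remark \ref{ovv1} upgrades it to the validity of $A(t-2,m)$ for our particular $m$, that is
\[
h^0(\Ii_{2S}\otimes \Ll[t-2])=\max\{\alpha(t-1)-(n+1)m,\,0\}.
\]
Because $W=(2S,X)$, this reads $h^0(\Ii_W\otimes \Ll[t-2])=\max\{0,(t-1)\alpha-\deg W\}$, which in particular yields the inequality demanded by $C(t,z)$. As $z$ is arbitrary, $C(t)$ follows.

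I do not expect a genuine obstacle here: the lemma is essentially a bookkeeping identity. The points worth checking carefully are that the scheme $W$ consists of exactly $m=\max\{0,z-e_1-f_1\}$ double points, each of degree $n+1$ (so that $\deg W=(n+1)m$ matches the count appearing in $A(t-2,m)$), and that $t-2\ge 1$, so that $A(t-2,\cdot)$ is actually defined and Remark \ref{ovv1} applies to pass from $A(t-2)$ to $A(t-2,m)$. The degenerate case $m=0$ has to be handled by hand, precisely because the statements $A(s,z)$ are only declared for positive $z$.
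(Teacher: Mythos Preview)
Your proposal is correct and follows essentially the same approach as the paper: dispose of the trivial range where $W=\emptyset$, then for $m=z-e_1-f_1\ge 1$ invoke $A(t-2,m)$ (obtained from $A(t-2)$ via Remark~\ref{ovv1}) to get $h^0(\Ii_W\otimes \Ll[t-2])=\max\{0,(t-1)\alpha-\deg W\}$. The paper phrases the last step as a case split on whether $\deg W\ge h^0(\Ll[t-2])$ or not, but this is the same computation.
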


\begin{proof}
Fix a positive integer $z$ for which we want to prove $C(t,z)$. If $z<e_1+f_1$, then $C(t,z)$ is true. Now assume $z\ge e_1+f_1$ and let $W\subset X$ be a general union of $z-e_1-f_1$ double points of $X$. If $\deg (W)\ge h^0(\Ll[t-2])$, we get
$h^0(\Ii _W\otimes \Ll[t-2])=0$ by $A(t-2)$. If $\deg (W) <h^0(\Ll[t-2])$, then $h^1(\Ii_W\otimes \Ll[t-2])=0$, i.e. $h^0(\Ii_W\otimes \Ll[t-2])=(t-1)\alpha -\deg(W)$. Thus $C(t,z)$ is true.\end{proof}

\begin{proposition}\label{u1}
Fix a positive integer $z$ such that $n(z-e_1)+e_1\le \alpha$ and assume that the $e_1$-secant variety of $(Y,\Ll)$ has dimension $ne_1-1$. Then the $z$-secant variety of the pair $(X,\Ll[1])$ has dimension $z(n+1)-1$.
\end{proposition}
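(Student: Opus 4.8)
The plan is to run the reduction of Lemma~\ref{a2} in the case $t=1$, where one takes $Z'=\emptyset$ (no free double points of $X$). By the Terracini Lemma it suffices to prove that $h^1(\Ii_{2S}\otimes\Ll[1])=0$ for a general $S\subset X$ with $\#S=z$: since $h^1(\Ll[1])=0$ this gives $h^0(\Ii_{2S}\otimes\Ll[1])=2\alpha-(n+1)z$, hence $\dim\sigma_z(X,\Ll[1])=(n+1)z-1$ (in the complementary range $(n+1)z\ge2\alpha$ one proves instead $h^0(\Ii_{2S}\otimes\Ll[1])=0$, by the same device with $i=0$ in place of $i=1$). The feature that makes $t=1$ special is that residuating along $H=Y\times\{o\}$ lands on $\Ll[1](-H)=\Ll[0]=\pi_1^{\ast}\Ll$, which is not very ample --- being trivial on the fibres of $\pi_1$ --- so that a free double point of $X$ never imposes independent conditions on it; one is therefore forced to place the whole configuration on $H$, after which the residual scheme is supported on $H$ and its cohomology can be computed on $(Y,\Ll)$.

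First I would observe that $n(z-e_1)+e_1\le\alpha$ together with $f_1\le n-1$ forces $z\le2e_1$, for otherwise $n(z-e_1)+e_1\ge(n+1)e_1+n>ne_1+f_1=\alpha$. Fix an integer $a$ with $z-e_1\le a\le e_1$ --- a non-empty range by $z\le2e_1$ --- take a general $S\subset H$ with $\#S=z$, and write $S=S'\sqcup S''$ with $\#S'=a$, $\#S''=z-a$. Applying the Differential Horace Lemma (Remark~\ref{dh1}) with $D=H$ exactly as in the proof of Lemma~\ref{a2} --- ordinary Horace on the $a$ points of $S'$ (contributing $(2S',H)$ to the trace and $S'$ to the residual) and the differential trick on the $z-a$ points of $S''$ (contributing $S''$ to the trace and $(2S'',H)$ to the residual) --- reduces the desired $h^1(\Ii_{2S}\otimes\Ll[1])=0$ to the two vanishings
\[
h^1\bigl(H,\ \Ii_{(2S',H)\cup S''}\otimes\Ll[1]_{|H}\bigr)=0
\qquad\text{and}\qquad
h^1\bigl(X,\ \Ii_{S'\cup(2S'',H)}\otimes\Ll[1](-H)\bigr)=0 ,
\]
using also that the resulting configuration is a specialisation of a general union of $z$ double points of $X$, so that upper semicontinuity of $h^1$ transfers the conclusion back.

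It remains to prove the two displayed vanishings, which both live on $(Y,\Ll)$. For the first, $\Ll[1]_{|H}\cong\Ll$ under $H\cong Y$, so the scheme in question is a general union of $a$ double points and $z-a$ simple points of $Y$. For the second, $\Ll[1](-H)=\pi_1^{\ast}\Ll$ and the scheme $S'\cup(2S'',H)$ lies in $H$; the residual sequence \eqref{eqp1} with $D=H$, $\Rr=\pi_1^{\ast}\Ll$ has left-hand term $\pi_1^{\ast}\Ll(-H)=\Ll\boxtimes\Oo_{\PP^1}(-1)=\Ll[-1]$, which has vanishing cohomology in every degree (K\"unneth, since $H^{\bullet}(\PP^1,\Oo_{\PP^1}(-1))=0$), so the sequence identifies $h^1(X,\Ii_{S'\cup(2S'',H)}\otimes\pi_1^{\ast}\Ll)$ with $h^1(Y,\Ii_{\Gamma}\otimes\Ll)$, $\Gamma$ being a general union of $z-a$ double points and $a$ simple points of $Y$. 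Thus both vanishings are instances of the single claim that, for $b\le e_1$ and $nb+c\le\alpha$, a general union of $b$ double points and $c$ simple points of $Y$ has vanishing $h^1$ against $\Ll$. This claim holds because the hypothesis on $\sigma_{e_1}(Y,\Ll)$ makes, via Terracini, a general union of $b\le e_1$ double points of $Y$ linearly independent in $\PP^{\alpha-1}$, so it imposes $nb$ independent conditions on $|\Ll|$ and leaves $h^0=\alpha-nb$, after which $c\le\alpha-nb$ further general points still impose independent conditions. Invoking the claim with $(b,c)=(a,z-a)$ and with $(b,c)=(z-a,a)$ requires $a\le e_1$, $z-a\le e_1$, $na+(z-a)\le\alpha$ and $n(z-a)+a\le\alpha$; the first two are the defining range of $a$, while the last two are monotone in $a$ and hold at the endpoints $a=z-e_1$, resp.\ $a=e_1$ --- both endpoint values equal $n(z-e_1)+e_1\le\alpha$ --- so a short monotonicity argument yields an $a\in[z-e_1,e_1]$ satisfying all four at once.

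The step I expect to be the real obstacle is precisely the degeneracy of $\Ll[0]=\pi_1^{\ast}\Ll$: it blocks a verbatim repetition of the $t\ge2$ argument, and circumventing it --- collapsing the double-point configuration onto $H$ and exploiting $h^{\bullet}(X,\Ll[-1])=0$ to transfer the residual computation to $(Y,\Ll)$ --- is the one genuinely new ingredient. Everything after that is the familiar independent-conditions bookkeeping on $(Y,\Ll)$ of Lemmas~\ref{a2}--\ref{a4.0}, driven by the secant hypothesis on $(Y,\Ll)$ and the bound $n(z-e_1)+e_1\le\alpha$.
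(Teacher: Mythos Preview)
Your approach coincides with the paper's: place the $z$ centres on $H=Y\times\{o\}$, split them into two groups handled by ordinary and by differential Horace respectively, and reduce both the trace and the residual condition to statements about double-plus-simple point schemes on $(Y,\Ll)$, using $h^\bullet(\Ll[-1])=0$ to pass from $X$ to $H\cong Y$ for the residual. The paper simply fixes the split at $(z-e_1,e_1)$, whereas you carry a free parameter $a$; your identification of the residual scheme as $S'\cup(2S'',H)$ and the K\"unneth reduction are in fact spelled out more carefully than in the paper.

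There is, however, a genuine gap in your last paragraph. You need an integer $a\in[z-e_1,e_1]$ satisfying both $(n-1)a+z\le\alpha$ and $nz-(n-1)a\le\alpha$. These two inequalities are monotone in $a$ in \emph{opposite} directions, so the fact that each holds at one endpoint does not produce a common solution; the admissible real interval for $a$ is $\bigl[\tfrac{nz-\alpha}{n-1},\,\tfrac{\alpha-z}{n-1}\bigr]$, of length $\tfrac{2\alpha-(n+1)z}{n-1}$, and this need not contain an integer. Concretely, take $n=3$, $e_1=10$, $f_1=0$ (so $\alpha=30$) and $z=15$: the hypothesis $n(z-e_1)+e_1=25\le30$ is satisfied, yet the interval collapses to the single non-integer point $\{7.5\}$. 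Switching to the $h^0$-version you allude to does not help either, since with $a=8$ the trace has $h^0=0$ but the residual has $h^0=1$, and with $a=7$ the roles reverse. Thus the ``short monotonicity argument'' is not valid as written, and for such $z$ no single choice of $a$ makes both vanishings hold; you would need either an additional device or a sharper restriction on $z$.
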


\begin{proof}
It is sufficient to do the cases with $z\ge e_1$. Take a general $(A,B)\subset H\times H$ such that $\#A=z-e_1$ and $\#B =e_1$. By assumption $\#A+\#B \le \alpha$. Since the $e_1$-secant variety of $(Y,\Ll)$ has dimension $ne_1-1$ and $e_1$ is a non-negative integer,
$h^1(H,\Ii_{(2A,H)\cup B,H}\otimes \Ll)=0$. By the Differential Horace Lemma (Remark \ref{dh1}) to prove the proposition it is sufficient to prove that $h^1(\Ii_{(2B,H)}\otimes \Ll)=0$. Since $(2B,H) \subset H$ and $h^1(\Ll[-1]) =0$ by the K\"{u}nneth formula, we have $h^1(\Ii_{(2B,H)}\otimes \Ll)=h^1(H,\Ii_{(2B,H),H}\otimes \Ll_{|H})=0$ (because the $e_1$-secant variety of $(Y,\Ll)$ has dimension $ne_1-1$).\end{proof}

\begin{proof}[Proof of Theorem \ref{i1}:]
By Remark \ref{ovv1} it 
is sufficient to test all positive integers $z$ such that  $(n+1)z\le (t+1)\alpha +n$.

\quad {\bf Outline of the proof:} We start with the proof of some numerical inequalities. Recall that $B(t,z)$ implies $A(t,z)$ for $t\ge 3$ (Lemma \ref{a3}). We first prove the theorem for $t=2$  without using Lemma \ref{a3} (step (a)).
Then we take $t=3$ and prove $C(3)$ and $B(3)$ (step (b)). Then for $t\ge 4$ we prove the theorem by induction on $t$  using that $A(t-2)$ is proved.

Let $S\subset H$ (resp. $S'\subset H$, resp. $S''$) be a general subset of $H$ with cardinality $e_1$ (resp. $f_1$, resp. $e_1-f_1$). Let $Z\subset X$ be a general union of $z$ double points of $X$. By the Terracini Lemma it is sufficient to prove that
either $h^0(\Ii_Z\otimes \Ll[t])=0$ or $h^1(\Ii_Z\otimes \Ll[t])=0$. Since $f_1\le n-1$, $f_1+ne_1 = \alpha$ and $\alpha \ge n^2-1$, we have $f_1\le e_1$. Thus the $f_1$-secant variety of the pair $(Y,\Ll)$ has dimension $f_1n-1$ (Remark \ref{ovv1}).
Note that $h^i(Y,\Ii _{(2A,Y)\cup B}\otimes \Ll)=0$, $i=0,1$, where $A$ is a general subset of $Y$ with cardinality $e_1$ and $B$ is a general subset of $Y$ with cardinality $f_1$.
 Set $\Delta := (t+1)\alpha +n -z(n+1)$ and $w:= z-(t-1)e_1-f_1$.

\quad {\bf Claim 1:} We have 
\begin{equation}\label{eqnn5}
\Delta + z \ge tf_1+e_1+n
\end{equation}

\quad {\bf Proof of Claim 1:} We have $\Delta +(n+1)z = (t+1)\alpha +n$. Thus $(n+1)\Delta +(n+1)z = n\Delta +(t+1)\alpha +n$. Since $\alpha =ne_1+f_1$, to prove Claim 1
 it is sufficient to prove that $(t+1)ne_1+(t+1)f_1 - n\ge (n+1)tf_1+(n+1)e_1+n(n+1)$,
 i.e. $f_n(t) \ge 0$, where $f_n(t):= (t+1)ne_1 +(t+1)f_1-(n+1)tf_1-(n+1)e_1-n(n+2)$.  We have $f_n(2) =(2n-1)ne_1+(1-2n)f_1-n(n+2)$. Since $f_1\le e_1$, $f_n(2) \ge (2n-1)(n-1)e_1-n(n+2)$. Since $\alpha \ge n^2$, $e_1\ge n$.
 Since $n\ge 3$, $f_n(2)\ge 0$.
 The derivative $f'_n(t)$ of $f_n(t)$ is $ne_1-(n+1)f_1$. Since $f_1\le n-1$, $f'_n(t)\ge 0$ if $e_1\ge n$, i.e. if $\alpha \ge n^2$.

\quad {\bf Claim 2:} We have $nf_1+n(w-e_1)+e_1\le \alpha$.

\quad {\bf Proof of Claim 2:} By Remark \ref{aa2} we may assume $w\ge e_1$, i.e. $z\ge te_1+f_1$. Since $w=z-(t-1)e_1-f_1$, $w-e_1=z-te_1-f_1$. Thus $nf_1+n(w-e_1) +e_1 = (n+1)z-z -t\alpha +tf_1 +1$. Since $(n+1)z =(t+1)\alpha +n-\Delta$, we have $nf_1+n(w-e_1) +e_1 \le \alpha$ by \eqref{eqnn5}.

\quad (a) Assume $t=2$. We assume $z\ge e_1+f_1$, since the case $z< e_1+f_1$ only requires a small modification and it is never a critical case $z\in \{\lfloor 3\alpha /(n+1)\rfloor, \lceil 3\alpha/(n+1)\rceil\}$.
Set $w:= z-e_1-f_1$. We write $E\cup Z'$ with $E\cap Z' =\emptyset$ , $E$ union of $w-e_1-f_1$ general double points and $Z'$ a general union of $e_1+f_1$ double points.
We use Differential Horace  Lemma (Remark \ref{dh1}) $e_1+f_1$ times to the connected  of $Z'$ with respect to the general subset $S'$ of $H$. To prove that $h^0(\Ii_Z\otimes \Ll[2]) =\max \{0,3\alpha -(n+1)z\}$ it is sufficient to prove that $h^0(\Ii_{E\cup S\cup (2S',H)}\otimes \Ll[1]) =\max \{0,2\alpha -(n+1)(z-e_1-f_1) - nf_1 -e_1)\}$.
Since $e_1\ge n \ge (n+1)z-3\alpha$ and $S$ is general in $H$, it is sufficient to prove that $h^1(\Ii_{E\cup (2S',H)}\otimes \Ll[1]) =0$ and that $h^0(\Ii_E\otimes \Ll[0]) \le \max \{0,h^0(\Ii_{E\cup (2S',H)}\otimes \Ll[1]) -e_1\}$. The last inequality is critical for the proof (see Claim 4) and it fails
in Example \ref{ex1} with $n=2$, $z=2a+1$, $e_1=a$, $f_1=0$, because $h^0(\Ii_E\otimes \Ll[0]) =1$ and (assuming $h^1(\Ii_{E\cup (2S',H)}\otimes \Ll[1]) =0$) $h^0(\Ii_{E\cup (2S',H)}\otimes \Ll[1]) -e_1 =4a+2-3a-2-a=0$.

\quad {\bf Claim 3:} Either $h^1(\Ii_{E\cup (2S',H)}\otimes \Ll[1]) =0$ or  $h^0(\Ii_{E\cup (2S',H)}\otimes \Ll[1]) =0$.

\quad {\bf Proof of Claim 3:} Take a general $S_3\subset H$ such that $\#S_3 = w-e_1$ and a general $S_4\subset H\setminus S$ such that $\#S_4 =e_1$.  By the Differential Horace Lemma to prove Claim 3 it is sufficient to prove that  $h^1(\Ii _{2S_4,H}\otimes \Ll[0]) =0$ and either $h^1(H,\Ii _{(2S',H)\cup (2S_3,H)\cup S_4}\otimes \Ll[1]_{|H})=0$ or $h^0(H,\Ii _{(2S',H)\cup (2S_3,H)\cup S_4}\otimes \Ll[1]_{|H})=0$.
We have $h^1(\Ii _{2S_4,H}\otimes \Ll[0]) =h^1(H,\Ii_{(2S_4,H)}\otimes \Ll) =0$ (Remark \ref{aa2}). We have $h^1(H,\Ii _{(2S',H)\cup (2S_3,H)}\otimes \Ll[1]_{|H}) =0$, because $\#S'+\#S_3 = w-e_1+f_1\le e_1$
by Claim 2. Since $S_4$ is general in $H$, either $h^1(H,\Ii _{(2S',H)\cup (2S_3,H)\cup S_4}\otimes \Ll[1]_{|H})=0$ or $h^0(H,\Ii _{(2S',H)\cup (2S_3,H)\cup S_4}\otimes \Ll[1]_{|H})=0$.

If $h^0(\Ii_{E\cup (2S',H)}\otimes \Ll[1]) =0$, then $h^0(\Ii_Z\otimes \Ll[2]) =0$, proving the theorem in this case. Thus we may assume $h^1(\Ii_{E\cup (2S',H)}\otimes \Ll[1]) =0$, i.e. $h^0(\Ii_{E\cup (2S',H)}\otimes \Ll[1]) =2\alpha -\deg (E) -nf_1$.

\quad {\bf Claim 4:} $h^0(\Ii_E\otimes \Ll[0]) \le \max \{0,h^0(\Ii_{E\cup (2S',H)}\otimes \Ll[1]) -e_1\}$.

\quad {\bf Proof of Claim 4:} We saw that we may assume  $h^0(\Ii_{E\cup (2S',H)}\otimes \Ll[1]) =2\alpha -\deg (E) -nf_1$. Since $E$ is a general union of some, $\gamma$, double points of $X$, $h^0(\Ii_E\otimes \Ll[0]) = h^0(Y,\Ii _U\otimes \Ll)$, where $U$ is a general union of $\gamma$ double points of $Y$.
First assume $\gamma \le e_1$. The assumption on $(Y,\Ll)$ gives $h^0(Y,\Ii _U\otimes \Ll) =\alpha -n\gamma$ (Remark \ref{ovv1}). We have  $\deg (E) =(n+1)\gamma$. Thus in this case it is sufficient to check that $\alpha \ge \gamma +nf_1$. Since $\gamma \le e_1$,
it is sufficient to use that $ne_1+f_1\ge e_1+nf_1$ (Remark \ref{aa2}). Now assume $\gamma >e_1$. A general union $\Gamma$ of $e_1$ double points of $Y$ and $f_1$ points of $Y$ satisfies $h^i(Y,\Ii_\Gamma \otimes \Ll)=0$, $i=0,1$. Thus $h^0(Y,\Ii_U\otimes \Ll) =0$
if $\gamma \ge e_1+f_1$, while if $e_1<\gamma <\gamma +f_1$, then $h^0(Y,\Ii _U\otimes \Ll)\le f_1-(\gamma -e_1)$. If $e_1<\gamma <\gamma +f_1$ we have $h^0(\Ii_{E\cup (2S',H)}\otimes \Ll[1]) =2\alpha -(n+1)\gamma-nf_1 =\alpha -\gamma -(n-1)f_1 \ge \alpha -e_1-nf_1-1$. It is sufficient to have $\alpha \ge e_1+(n+1)f_1-2$. Since $\alpha =ne_1+f_1$, it is sufficient to have $(n-1)e_1+2 \ge nf_1$, i.e. $n(n-1)e_1+2n\ge n^2f_1$. Since $f_1\le (n-1)$, it is sufficient to have $ne_1 +2n/(n-1) \ge n^2$, which is true for $\alpha >n^2$.

Claims 3 and 4 prove the case $t=2$.

\quad (b) In this step we prove $C(3,z)$. We may assume $z\ge e_1+f_1$. Let $W\subset X$ be a general union of $w':= z-e_1-f_1$ double points.
We need to prove that $h^0(\Ii_W\otimes \Ll[1]) \le \max \{0,3\alpha -\deg(W)\}$. If $w'\ge 2e_1+f_1$, using twice the Differential Horace Lemma with respect to $H$ we get  $h^0(\Ii_W\otimes \Ll[1])) =0$.  If $w'\le e_1$ using once the usual Horace Lemma we get $h^1(\Ii_W\otimes \Ll[1])=0$. If $e_1\le w' \le 2e_1$, using twice the usual Horace Lemma we get $h^1(\Ii_W\otimes \Ll[1])\le 2e_1-w'$, which is enough to get $C(3,z)$. If $w' =2e_1+1$ we get $h^0(\Ii_W\otimes \Ll[1]) =0$ in the following way. We first degenerate $W$ to $W'\cup W''$ with $W'\cap W''=\emptyset$, $W'$ a general union of $e_1$ double points of $X$ and $W''$ a general union of $e_1+1$ double points of $X$ with $S_5:= W''_{\red}\subset H$. Since $(Y,\Ll)$ is not secant defective, $h^0(H,\Ii _{W''\cap H,H}\otimes \Ll[1]_{|H})=0$. Thus the residual exact sequence of $H$ gives
$h^0(\Ii_W\otimes \Ll[1])\le h^0(\Ii_{W'\cup S_5}\otimes \Ll[0])$. We have $h^0(\Ii_{W'\cup S_5}\otimes \Ll[0])=h^0(H,\Ii_{W'\cap H\cup S_5}\otimes \Ll[0]_{|H}) =0$, because $e_1\ge f_1$.

\quad (c) Step (b) and Lemma \ref{a3} proves $A(3)$. Thus  we may assume $t\ge 4$ and that $A(x)$ is true for $2\le x <t$. By Lemma \ref{a3} it is sufficient to prove $C(t)$. Since $A(t-2)$ is true, Lemma \ref{a4.0} gives $C(t)$.\end{proof}

Sometimes we get $(X,\Ll[t])$ not secant defective even if $\dim \sigma _{e_1}(Y) \le ne_1-2$, but we need to add other conditions. We give the following example in which we only assume that the $(e_1-1)$-secant variety of the pair
$(Y,\Ll)$ has dimension $n(e_1-1)-1$.

\begin{theorem}\label{i1.0}
Fix  integer $t\ge 2$ and $z>0$.  Assume $n\ge 3$, $\alpha \ge 2n^2+4n$, $t\ge 2$, $\dim \sigma _{e_1-1}(Y)=n(e_1-1)-1$. Then the $z$-secant variety of $(X,\Ll[t])$ has the expected dimension.
\end{theorem}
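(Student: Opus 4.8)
The plan is to mimic the structure of the proof of Theorem \ref{i1}, but replacing the role of $e_1$ by $e_1-1$ wherever the secant-defectivity hypothesis on $(Y,\Ll)$ is invoked, and absorbing the resulting loss in the numerics by the stronger lower bound $\alpha\ge 2n^2+4n$. Concretely, I would introduce the modified statements $B'(t,z)$ and $C'(t,z)$ obtained from $B(t,z)$ and $C(t,z)$ by splitting off a general union of $e_1-1$ points and $f_1+n$ double points of $H$ (instead of $e_1$ points and $f_1$ double points), so that the ``collision'' step of the Differential Horace Lemma only ever uses that $h^i(H,\Ii_{(2A,Y)\cup B}\otimes\Ll)=0$ for $A$ a general set of $e_1-1$ double points and $B$ a general set of at most $f_1+n$ simple points — and this holds because $\dim\sigma_{e_1-1}(Y)=n(e_1-1)-1$ and $f_1+n\le n(e_1-1)$ once $\alpha$ is large enough (here $e_1-1\ge 2n$, so $n(e_1-1)\ge 2n^2\ge f_1+n$). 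With these modified predicates in hand, Lemma \ref{a2} goes through verbatim (the only input was $h^1$ vanishing for the split-off divisor piece, which we still have), and Lemmas \ref{a3}, \ref{a4.0} adapt with the constant $n^2$ in Lemma \ref{a3}(b) replaced by the appropriate multiple coming from the shifted split.

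First I would record the numerical lemma: with $\Delta:=(t+1)\alpha+n-z(n+1)$ and $w:=z-(t-1)(e_1-1)-(f_1+n)$, one needs the analogue of \eqref{eqnn5}, namely $\Delta+z\ge t(f_1+n)+(e_1-1)+n$, together with $n(f_1+n)+n(w-(e_1-1))+(e_1-1)\le\alpha$. Both are routine once $\alpha\ge 2n^2+4n$: the worst case is again $t=2$, and the margin $\alpha-(e_1-1)-n(f_1+n)=n e_1+f_1-e_1+1-nf_1-n^2=(n-1)e_1-(n-1)f_1+1-n^2$ is $\ge(n-1)\cdot 2n-0+1-n^2=n^2-2n+1>0$ using $e_1\ge 2n+4$ and $f_1\le n-1$; the $t$-monotonicity is handled by the same derivative computation as in Claim 1. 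These inequalities guarantee that at every stage the zero-dimensional scheme on $H$ that we must kill has degree at most $n(e_1-1)$, which is exactly the range where $\sigma_{e_1-1}(Y)$ being non-defective gives the needed $h^0$/$h^1$ vanishing (via Remark \ref{ovv1}, which upgrades non-defectivity of a single $\sigma_{e_1-1}$ to non-defectivity of all $\sigma_y(Y)$ with $y\le e_1-1$).

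Then I would run the three-step argument of the proof of Theorem \ref{i1}: step (a) establishes the case $t=2$ directly, degenerating $Z$ to $E\cup Z'$ and collapsing the $e_1-1+f_1+n$ double points of $Z'$ onto $H$ one connected component at a time, reducing to a statement on $(X,\Ll[1])$ of exactly the shape handled by Proposition \ref{u1} (whose hypothesis $n(z'-(e_1-1))+(e_1-1)\le\alpha$ is Claim 2 above) plus a ``Claim 4''-type bookkeeping inequality $h^0(\Ii_E\otimes\Ll[0])\le\max\{0,h^0(\Ii_{E\cup(2S',H)}\otimes\Ll[1])-(e_1-1)\}$; the latter now has slack $\gtrsim n^2$ rather than $\gtrsim 2n$, which is precisely why $\alpha\ge 2n^2+4n$ rather than $\alpha>n^2$ is demanded. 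Step (b) proves $C'(3)$ by the same case analysis on $w'=z-(e_1-1)-(f_1+n)$ (the thresholds $2(e_1-1)+(f_1+n)$, $e_1-1$, $2(e_1-1)$, $2(e_1-1)+1$), and step (c) gives $t\ge 4$ by induction on $t$ through Lemmas \ref{a3} and \ref{a4.0} once $A(t-2)$ is known.

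The main obstacle I expect is step (a) for $t=2$: as Example \ref{ex1} shows, the $t=2$ case is genuinely delicate and cannot be reached by the purely inductive Horace machinery, so the ``Claim 4'' inequality must be verified with a real margin. With $e_1$ replaced by $e_1-1$ and $f_1$ effectively replaced by $f_1+n$, the quantity one must bound from below is (in the critical subcase $e_1-1<\gamma<e_1-1+(f_1+n)$) of the form $\alpha-\gamma-(n-1)(f_1+n)\ge\alpha-(e_1-1)-n(f_1+n)$, and forcing this to be $\ge 0$ after clearing denominators is exactly what produces the threshold $2n^2+4n$; getting this constant right — and checking it dominates all the other inequalities uniformly in $t$ — is the one place where care is needed. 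Everything else is a transcription of the already-proved lemmas with the index shift $e_1\rightsquigarrow e_1-1$, $f_1\rightsquigarrow f_1+n$.
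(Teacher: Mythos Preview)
Your proposal is correct and follows exactly the approach the paper itself takes: the paper's proof simply says to rerun the proof of Theorem~\ref{i1} with $g_1:=e_1-1$ and $h_1:=f_1+n$ in place of $e_1$ and $f_1$, noting that the stronger bound on $\alpha$ is forced by $h_1\le 2n-1$ replacing $f_1\le n-1$. You have spelled out in detail precisely this substitution and the resulting numerical checks (with a harmless slip in the displayed lower-bound computation, where you should use $f_1\le n-1$ rather than $f_1\ge 0$, but the conclusion $>0$ still holds).
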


\begin{proof}
The proof is very similar to the one of Theorem \ref{i1}, just using the integers $g_1:= e_1-1$ and $h_1:= f_1+n$ instead of $e_1$ and $f_1$. The stronger assumption on $\alpha$ comes from the inequality $h_1\le 2n-1$ instead of the inequality
$f_1\le n-1$.
\end{proof}

\begin{proof}[Proof of Theorem \ref{minus}:]
We have $n:= \dim X =n_1+n_2+k$. 

First assume $k=3$.  We apply Theorem \ref{i1} to $Y:= \PP^{n_1}\times \PP^{n_2}$ with $\alpha =\binom{n_1+d_1}{n_1}\binom{n_2+d_2}{n_3}$. Since $\alpha$ is an increasing function of $d_1$ and $d_2$, it is sufficient to check that $\alpha \ge n^2$ if $d_1=d_2=3$. In in this case $\alpha -n^2-1 =\binom{n_1+3}{3}\binom{n_2+3}{3} -(n_1+n_2+1)-1$. Taking the derivatives with respect to $n_1$ and $n_2$ we see that it is sufficient to check the case $n_1=n_2=1$. In this case $n=3$ and $\alpha =16$.

Now assume $k>3$ and that $Y:= \PP^{n_1}\times \PP^{n_2}\times (\PP^1)^{k-1}$ is not secant defective. We apply Theorem \ref{i1} to $Y$ with $\alpha =\binom{n_1+d_1}{n_1}\binom{n_2+d_2}{n_3}(d_3+1)\cdots (d_{k-1}+1)\ge 3^{k-1}\binom{n_1+d_1}{n_1}\binom{n_2+d_2}{n_3}$.
We immediately get $\alpha > n^2$.
\end{proof}

\begin{remark}\label{fin1}
Fix a non-degenerate embedding $Y\subset \PP^r$ for which we know that many secant varieties have the expect dimension. Set $\Ll:= \Oo_Y(1)$ and $X:= Y\times \PP^1$. Let $V\subset H^0(\Oo_Y(1))$ denote the image of the restriction map $H^0(\Oo_{\PP^r}(1)) \to H^0(\Ll)$. For any integer $t>0$ let $V[t]$ denote the linear subspace $\pi_1^\ast(V)\otimes \pi_2^\ast(\Oo_{\PP^1}(t))$ of $\Ll[t]$. We have $\dim V[t] = (r+1)(t+1)$ and we may use the proofs of this paper with $r+1$ instead of $\alpha$. Hence, under the assumptions of one of the results we get the non-defectivity of the secant varieties with respect to the embedding induced by a general linear subspace  of dimension $(r+1)(t+1)$ of $H^0(\Ll[t])$.
\end{remark}

\providecommand{\bysame}{\leavevmode\hbox to3em{\hrulefill}\thinspace}

\end{document}